\newcommand{\fm}{\mathfrak{m}}
\newcommand{\lbr}[2]{ [ \hspace*{-1.6pt} [ #1 , #2 ] \hspace*{-1.7pt} ] }
\newtheorem{lemma}{Lemma}
\newtheorem{cor}{Corollary}
\newtheorem{theorem}{Theorem}
\newtheorem{prop}{Proposition}
\theoremstyle{definition}
\newtheorem{remark}{Remark}
\newtheorem*{definition}{Definition}
\title{A cohomological construction of modules \\ over Fedosov deformation quantization algebra. \\ The global case}
\author{S. A. Pol'shin\thanks{E-mail: polshin.s at gmail.com}\\
{\small Institute for Theoretical Physics} \\
{\small NSC Kharkov Institute of Physics and Technology} \\
{\small Akademicheskaia St. 1, 61108 Kharkov, Ukraine }}
\date{}
\begin{document}
\maketitle

\begin{abstract}
Let $(M,\omega)$ be a symplectic manifold,  $\mathcal{D}\subset TM$ a real polarization on $M$ and $\wp$ a leaf of $\mathcal{D}$. We construct a Fedosov-type star-product $\ast_L$ on $M$ such that  $C^\infty (\wp)[[h]]$ has the natural structure of a left module over the deformed algebra $(C^\infty (M)[[h]], \ast_L)$.

MSC 2000: 53D55 53D50 17B55 53C12 13B30

Key words: Fedosov quantization, polarization,
contracting homotopy, projective module, maximal ideal.
\end{abstract}

\section{Introduction}

The ordinary deformation quantization scheme~\cite{Ber74,BFFLS} deals with the deformation of the point-wise product of functions on a symplectic manifold. However, it was realized in the early 90s that a purely algebraic approach based on appropriate geometric structures may be more efficient~\cite{Hueb,Grab}. The most successful attempt  in this direction was made by Fedosov~\cite{Fedosov1,Fedosov2} who also constructed star-products on an arbitrary symplectic manifold as a by-product; the algebraic nature of Fedosov's construction was shown by Donin~\cite{Donin} and Farkas~\cite{Farkas}.

The problem of constructing modules over Fedosov
deformation quantization which generalize the states of textbook
quantum mechanics  is of great interest (see~\cite{WaldStates,BordTrav} for a review). In a recent paper~\cite{IJGMP} this problem has been solved in a certain neighborhood $U$ of an arbitrary point of a symplectic manifold $M$. In the present paper we extend this result onto the whole $M$. The main technical difficulty of this generalization comes from the fact that $\Gamma TM$ is projective as $C^\infty (M)$-module in general, while $\Gamma (U,TM)$ is free. To circumvent this difficulty, we systematically use the localization w.r.t. the maximal ideals of $C^\infty (M)$ and thus reduce the projective case to the free one. This allows us to construct adapted star products on $M$ in the sense of~\cite{BGHHW}.

The plan of the present paper is the following. In Sec.~2 we construct the Weyl algebra for $\Gamma TM$ and prove an analog of the Poincar\'e-Birkhoff-Witt theorem,
in Sec.~3 we consider the Koszul complex, in Sec.~4 we define various ideals associated with a polarization $\mathcal{D}\subset TM$, in Sec.5 we introduce the symplectic connection on $M$ adapted to $\mathcal{D}$ and study its properties w.r.t. the ideals, in Sec.~6 we define the Fedosov complex and prove the main result.

\section{Weyl algebra}

Let $M$ be a symplectic manifold, $\dim M=2N$, let $A=C^\infty(M,\mathbb{R})$ be an $\mathbb{R}$-algebra of smooth functions on $M$ with pointwise multiplication, and $E=\Gamma TM$ the set of all smooth  vector fields on $M$  with the natural
structure of an unitary $A$-module. By $T(E)$ and $S(E)$ denote
the tensor and symmetric algebra of the $A$-module $E$ respectively,
and let $\wedge E^\ast$ be the algebra of smooth differential forms
on $M$. Let $\omega \in \wedge^2 E^\ast$ be a symplectic form on
$M$ and let $u:\ E\rightarrow \wedge^1 E^\ast$ be the mapping
$u(x)y=\omega (x,y), \ x,y\in E$. All the tensor products of modules in the present paper will be taken over $A$ unless otherwise indicated.

\begin{theorem}[Serre-Swan]
There is an equivalence of the category of vector bundles over $M$ with the category of finitely generated projective $A$-modules.
\end{theorem}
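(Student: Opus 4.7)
The plan is to exhibit mutually quasi-inverse functors between the two categories and verify that they respect morphisms. Let $\Gamma$ denote the functor sending a smooth vector bundle $V\to M$ to its $A$-module of global sections $\Gamma V$, acting on bundle morphisms in the obvious pointwise manner. In the opposite direction, I would construct a functor $\mathcal{V}$ that takes a finitely generated projective $A$-module $P$, chooses a presentation $P\oplus Q\cong A^n$, represents the corresponding projector by an idempotent matrix $e\in M_n(A)$, and assigns to $P$ the subbundle of the trivial bundle $M\times\mathbb{R}^n$ whose fiber over $x\in M$ is the image of $e(x)\in\mathrm{End}(\mathbb{R}^n)$.

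The first key step is to show that $\Gamma V$ is always finitely generated and projective. Here I would use paracompactness of $M$ to pick a finite (after passing to a subordinate partition of unity argument on a smooth manifold of finite type, or more generally by a Whitney-type embedding of the total space) collection of local trivializations, and then produce global sections $s_1,\dots,s_n\in\Gamma V$ that span each fiber. Bundling these into a surjection $A^n\twoheadrightarrow\Gamma V$, $(f_i)\mapsto\sum f_i s_i$, and splitting it via a bundle metric (existence guaranteed by partition of unity) gives $\Gamma V$ as a direct summand of $A^n$, hence finitely generated projective.

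The second key step is to verify that the fiberwise image of a smooth idempotent $e$ is indeed a smooth subbundle of constant rank on each connected component of $M$. Local constancy of $\mathrm{rk}\,e(x)$ follows because a nearby idempotent can be conjugated to $e(x_0)$ by a smooth family of invertible matrices (built from $\tfrac{1}{2}(1+(2e-1)(2e(x_0)-1))$ or similar), giving smooth local frames for $\mathrm{Im}\,e$. This yields the desired vector bundle structure on $\mathcal{V}(P)$.

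Finally, I would check the two natural isomorphisms $\Gamma\circ\mathcal{V}\cong\mathrm{id}$ and $\mathcal{V}\circ\Gamma\cong\mathrm{id}$. The first is essentially tautological: sections of the image subbundle of $e$ are exactly elements $v\in A^n$ with $ev=v$, which is $eA^n\cong P$. The second uses the splitting from step one to produce, from a bundle $V$, an idempotent $e$ with $eA^n\cong\Gamma V$, and then identifies the corresponding subbundle with $V$ fiberwise via evaluation. I expect the main obstacle to be the first step: producing a \emph{finite} generating set of global sections for $\Gamma V$, which genuinely requires either finiteness of the good-cover dimension of $M$ or a Whitney embedding argument, rather than just a generic partition-of-unity trick that would only yield a locally finite generating set and hence projectivity but not finite generation.
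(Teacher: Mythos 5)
The paper does not prove this theorem at all: it is stated as a classical fact, with references to~\cite{Morye,Vaser,Vin} for variants and generalizations, and the corollary actually used ($\Gamma TM$ is a finitely generated projective $C^\infty(M)$-module) is attributed to Rinehart~\cite{Rin}. So there is no paper-internal argument to compare against; what you have written is an independent sketch of the standard proof. As such a sketch, your outline is essentially correct: the functors $\Gamma$ and $\mathcal{V}$, the partition-of-unity/bundle-metric splitting of $A^n\twoheadrightarrow\Gamma V$, the conjugation trick $u=1+2ef-e-f$ (which indeed satisfies $eu=uf=ef$ and is invertible for $\|e-f\|$ small) showing that $\operatorname{Im}e$ is a smooth subbundle of locally constant rank, and the two natural isomorphisms are all the right ingredients.

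You have also correctly identified the one genuinely nontrivial point for noncompact $M$: a bare partition-of-unity argument yields only a surjection $A^{(\Lambda)}\twoheadrightarrow\Gamma V$ from a free module of possibly infinite rank, which would give projectivity but not finite generation. To get a \emph{finite} spanning set of global sections one must invoke that a smooth $n$-manifold has finite covering dimension, hence admits a finite open cover $U_0,\dots,U_n$ each of which is a disjoint union of sets trivializing $V$; this bounds the number of sections needed by $(n+1)\cdot\operatorname{rk}V$. You flag this as the likely obstacle rather than actually closing it, so as written your proposal has a gap, but it is precisely the gap the cited literature (e.g.\ Morye's note) exists to fill, and you were right to isolate it rather than paper over it with a vague appeal to paracompactness. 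If you wanted to make the argument self-contained you would need to either prove the finite-cover lemma or replace it with a Whitney embedding of $M$ into $\mathbb{R}^N$ and a classifying-map argument; the rest of the proposal would then go through.
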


For different variants and generalizations of Serre-Swan theorem see~\cite{Morye,Vaser,Vin} and references therein.

\begin{cor}[\cite{Rin},pp.202-3]\label{swan}
As an $A$-module, $E$ is a finitely generated projective $A$-module.
\end{cor}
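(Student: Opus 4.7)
The plan is to invoke the Serre-Swan theorem directly. The equivalence of categories stated in Theorem~1 is realized (in one direction) by the global sections functor, which sends a smooth vector bundle $V\to M$ to its $A$-module $\Gamma V$ of smooth sections. Since $TM$ is itself a smooth vector bundle over $M$ (of rank $2N$ in our symplectic setting), applying this functor gives $E = \Gamma TM$, and the equivalence of categories forces this module to be finitely generated and projective over $A$. So the whole proof amounts to one sentence: ``apply Theorem~1 to the bundle $TM$.''

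There is essentially no technical obstacle here; the only point to verify is that the functor realizing the Serre--Swan equivalence is indeed the section functor, which is the standard formulation in the references \cite{Morye,Vaser,Vin,Rin} cited by the author. No auxiliary structure on $M$ (such as compactness) is needed beyond what is implicit in the statement of Theorem~1 as used in \cite{Rin}.

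If one wanted to avoid invoking the full categorical equivalence, a direct argument is also short: using a locally finite trivializing cover and a subordinate partition of unity on $M$, one produces a finite collection $X_1,\dots,X_n\in E$ whose values span $T_xM$ at each point $x$. This gives a surjective $A$-linear map $A^n \twoheadrightarrow E$, and choosing a Riemannian metric on $M$ yields an $A$-linear splitting of this surjection (via fibrewise orthogonal projection), so $E$ is a direct summand of the free module $A^n$ and is therefore finitely generated and projective. Either route delivers the corollary; given that Theorem~1 has just been quoted, the one-line deduction is the natural choice.
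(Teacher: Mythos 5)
Your one-line deduction is exactly what the paper does: the corollary is stated with only a citation to Rinehart and is meant as an immediate consequence of Theorem~1 applied to the bundle $TM$, so your first paragraph reproduces the paper's (implicit) argument. The alternative direct argument you sketch is a standard supplement, though note that to extract a \emph{finite} generating set of sections one needs a finite trivializing cover (available by covering-dimension arguments or by Whitney embedding), not merely a locally finite one; as written that step is slightly glossed, but it does not affect the intended Serre--Swan route.
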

Let $\lambda$
be an independent variable (physically $\lambda=-i\hbar$) and
$A[\lambda]=A\otimes_\mathbb{R} \mathbb{R}[\lambda]$ etc. In the
sequel we will write $A,E$ etc. instead of $A[\lambda],
A[[\lambda]],E[\lambda],$ $E[[\lambda]]$ etc. Let $\mathcal{I}_W$
be the two-sided ideal in $T(E)$ generated by the relations $x \otimes
y-y \otimes x-\lambda\omega (x,y)=0$. The factor-algebra
$W(E)=T(E)/\mathcal{I}_W$ is called the Weyl algebra of $E$, so
we have the short exact sequence of $A$-modules
\begin{equation}\label{shes-1}
\xymatrix@1{
0 \ar[r] & \mathcal{I}_W \ar[r] &
 T(E) \ar[r] & W(E) \ar[r] & 0
}
\end{equation}
and let $\circ$ be the multiplication in $W(E)$.

An $N$-dimensional real distribution $\mathcal{D}\subset TM$ is
called a polarization   if it is (a) lagrangian, i.e.
$\omega(x,y)=0$ for all $x,y\in \mathcal{D}$ and (b) involutive,
i.e. $[x,y]\in \mathcal{D}$ for all $x,y\in \mathcal{D}$, where
$[.,.]$ is the commutator of vector fields on $M$. It is well
known~\cite{Etayo} that  we can always choose a lagrangian
distribution $\mathcal{D}'$ transversal to $\mathcal{D}$ and let
$L$ and $L'$ be the $A$-modules of smooth  vector fields
on $M$ tangent to $\mathcal{D}$ and $\mathcal{D}'$ respectively,
then $E=L\oplus L'$.

\begin{theorem}\label{S-W-isom}
There exists an $A$-module isomorphism $\pi:\ \xymatrix@1{S(E) \ar[r]^\cong & W(E)}$.
\end{theorem}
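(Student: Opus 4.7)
The plan is to construct $\pi$ as the complete symmetrization map and then establish the isomorphism property in two stages: first for free $A$-modules of finite rank by a standard PBW argument, then for the projective module $E$ by localization at maximal ideals of $A$, following the strategy announced in the introduction.

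First I would define $\pi: S(E) \to W(E)$ on monomials by
$$\pi(x_1 \cdots x_n) = \frac{1}{n!}\sum_{\sigma \in S_n} x_{\sigma(1)}\circ \cdots \circ x_{\sigma(n)},$$
and extend $A$-linearly. The right-hand side is manifestly symmetric in $x_1,\ldots,x_n$, so this is well-defined on $S(E)$, and the factor $1/n!$ is legitimate because $A$ is an $\mathbb{R}$-algebra. Surjectivity follows from a degree-filtration argument: writing $W^{\leq n}(E)$ for the image of $T^{\leq n}(E)$ in $W(E)$, the defining relation $x\circ y - y\circ x = \lambda\,\omega(x,y)$ shows that every element of $W^{\leq n}(E)$ agrees with its complete symmetrization modulo $W^{\leq n-1}(E)$, so induction on $n$ gives $\pi(S(E)) = W(E)$.

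For injectivity I would first treat the case in which $E$ is a free $A$-module with basis $e_1,\ldots,e_{2N}$. A standard PBW/diamond-lemma argument using the rewriting rule $e_j \circ e_i = e_i \circ e_j + \lambda\,\omega(e_j,e_i)$ for $j>i$ shows that the ordered monomials $e_{i_1}\circ \cdots \circ e_{i_n}$ with $i_1\leq \cdots \leq i_n$ form a free $A$-basis of $W(E)$. These correspond, via a unitriangular change of basis collecting the lower-degree symmetrization terms, to the standard basis of $S(E)$, so $\pi$ is an $A$-module isomorphism in the free case.

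To pass to the projective situation I would localize at each maximal ideal $\fm \subset A$. By Corollary~\ref{swan}, $E$ is finitely generated projective, so $E_\fm$ is free over $A_\fm$. Since tensor, symmetric, and quotient constructions all commute with localization, $\pi$ localizes to the symmetrization map for the free $A_\fm$-module $E_\fm$, and is therefore an isomorphism by the previous step. The standard local-global principle for module homomorphisms then forces $\pi$ itself to be an isomorphism. The main technical obstacle will be checking that the two-sided ideal $\mathcal{I}_W$ commutes with localization, so that $W(E)_\fm \cong W(E_\fm)$ naturally as $A_\fm$-modules; once this naturality is in hand, the free-module PBW lemma completes the argument.
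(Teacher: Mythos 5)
Your argument is correct as a proof of the bare existence statement, and it shares the localization strategy with the paper: the paper also reduces to the free module $E_\fm$ at each maximal ideal $\fm\in\mathop{\mathrm{Specm}}A$, checks that $(T(E))_\fm=T(E_\fm)$, $(S(E))_\fm\cong S(E_\fm)$, $(W(E))_\fm\cong W(E_\fm)$, and then invokes a local PBW theorem. However, the isomorphism you construct is a genuinely different one, and this difference is load-bearing for the rest of the paper. You take $\pi$ to be the total symmetrization $\pi(x_1\odot\cdots\odot x_n)=\frac{1}{n!}\sum_\sigma x_{\sigma(1)}\circ\cdots\circ x_{\sigma(n)}$, which corresponds to choosing the submodule $S'(E)\subset T(E)$ of fully symmetric tensors as the linear section of $\mu_W$. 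The paper instead chooses, after splitting $E=L\oplus L'$ along the polarization and fixing bases of $L_\fm$ and $L'_\fm$, the submodule $\tilde{S}(E_\fm)$ spanned by nonincreasing ordered monomials $e_I$, and defines $\pi_\fm=\mu_W\iota_1(\mu_S\iota_1)^{-1}$; in degree two with $e_\beta\in L'_\fm$ and $e_\alpha\in L_\fm$ this sends $e_\beta\odot e_\alpha$ to $e_\beta\circ e_\alpha$, not to $\frac12(e_\beta\circ e_\alpha+e_\alpha\circ e_\beta)$, so the two maps do not coincide. The reason the paper needs its version is spelled out in the remark after Lemma~1: when $\delta^{-1}$ is transported to $W(E)\otimes\wedge E^*$, the choice of linear section determines whether the resulting contracting homotopy preserves the left ideal $\mathcal{I}=\mathcal{I}_L\otimes\wedge E^*+W(E)\otimes\mathcal{I}_\wedge$. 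With $\tilde{S}(E)$ it does (this is what makes Lemma~1 and~(\ref{eq7}) work); with the symmetric tensors $S'(E)$ — i.e., with your symmetrization — it does not. So while your symmetrization argument, together with the local-global principle, cleanly proves Theorem~\ref{S-W-isom} and has the merit of producing a canonical, basis-free map defined directly on $S(E)$ rather than only locally, it yields an isomorphism that cannot be used for the rest of the construction. The paper's slightly more involved, polarization-adapted choice is forced on it by the downstream use, even though for the isolated statement either isomorphism would do.
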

\begin{proof}
Let $\fm\in\mathop{\mathrm{Specm}}A$ be a maximal ideal in $A$. For an arbitrary $A$-module $P$ consider its localization  $P\rightarrow P_{\fm}=A_{\fm}\otimes P$. It is well known that $(P\otimes Q)_{\fm} =P_{\fm} \otimes_{A_{\fm}} Q_{\fm}$, so $(T(E))_{\fm}= T(E_{\fm})$.

Due to Corollary~\ref{swan}, $E$ is flat and finitely presentable as the $A$-module, so there exists an isomorphism of $A_{\fm}$-modules
$$(E^*)_{\fm}\cong (E_{\fm})^* := \mathop{\mathrm{Hom}}_{A_{\fm}} (E_{\fm},A_{\fm})$$
and it may be extended to an isomorphism $(\wedge E^*)_{\fm} \cong \wedge E^*_{\fm}$. Let $\omega\in \wedge^2 E^*$ and $x,y\in E$, then there exists an element $\omega_{\fm}\in \wedge^2 E^*_{\fm}$ such that
\begin{equation}\label{o-o-m}
\omega_{\fm}(x/s,y/s')=\omega(x,y)/ss' \qquad \forall x/s,y/s'\in E_{\fm}
\end{equation}
as a result of the composition of the localization map and the mentioned isomorphism.

It is easily seen that $(\mathcal{I}_W)_{\fm}$ is the ideal in $T(E_{\fm})$ generated by the relations $x/1 \otimes_{A_{\fm}} y/1 - y/1 \otimes_{A_{\fm}} x/1 -\lambda\omega_{\fm}(x/1,y/1)=0$. Since the functor $A_{\fm}\otimes$ is exact, we have a short exact sequence of $A_{\fm}$-modules
$$\xymatrix@1{
0 \ar[r] & (\mathcal{I}_W)_{\fm} \ar[r] &
 T(E_{\fm}) \ar[r] & (W(E))_{\fm} \ar[r] & 0,
}
$$
so $W(E_{\fm})\cong (W(E))_{\fm}$, where $W(E_{\fm})$ is defined using the 2-form $\omega_{\fm}$ on $E_{\fm}$. Analogously $S(E_{\fm})\cong (S(E))_{\fm}$. Since $E_{\fm}$ is free as $A_{\fm}$-module and $E_{\fm}=L_{\fm}\oplus {L'}_{\fm}$, the theorem is proved using Prop.~\ref{PBW} below.
\end{proof}
\begin{remark}
For an arbitrary projective $A$-module $E$ Theorem~\ref{S-W-isom} was proved in~\cite{Rin} (see also~\cite{HKR,Hueb}). Here we gave a slightly different proof which is more appropriate for our purposes.
\end{remark}

\begin{remark}
It is well known that there is no 1-1 correspondence between "points" of $\mathop{\mathrm{Specm}}A$ and the points of $M$ unless $M$ is compact. So, $\omega_\fm$ need not be nondegenerate.
\end{remark}

Let $\alpha,\alpha_1,\ldots =1,\ldots,\nu$ and $\beta,\beta_1,\ldots=\nu+1,\ldots,\nu+\nu'$. Choose an $A_{\fm}$-basis $\{ e_i|\, i=1,\ldots,\nu+\nu' \}$ in $E_{\fm}$  such that $\{ e_\alpha|\,
\alpha=1,\ldots,\nu \}$ and $\{ e_\beta|\, \beta=\nu+1,\ldots,\nu+\nu'
\}$ are the bases in $L_{\fm}$ and ${L'}_{\fm}$ respectively.  Let $i_1,\ldots,i_p=1,\ldots,\nu+\nu'$ and
let $I=(i_1, \ldots,i_p)$  be an arbitrary sequence of indices. We
write $e_I=e_{i_1}\otimes_{A_\fm}\ldots\otimes_{A_\fm} e_{i_p}$ and we say that
the sequence $I$ is nonincreasing if $i_1 \geq i_2 \geq \ldots\geq
i_p$. We consider $\{\emptyset \}$ as a nonincreasing sequence and
$e_{ \{ \emptyset \} }=1$. We say that a sequence $I$ is of
$\alpha$-length $n$ if it contains $n$ elements less or equal than
$\nu$. Let $\Upsilon^n$ be the set of all nonincreasing sequences of
$\alpha$-length $n$ and $\Upsilon_n =\bigcup_{p=n}^\infty
\Upsilon^p$. The following proposition is a variant of the Poincare-Birkhoff-Witt theorem~\cite{Sridharan}.

\begin{prop}[Poincare-Birkhoff-Witt]\label{PBW}
Let $\tilde{S}(E_{\fm})$ be  the $A_{\fm}$-submodule of $T(E_{\fm})$ generated by $\{ e_I |\, I\in \Upsilon_0\}$. Then

(a) The restrictions $\mu_S |\tilde{S}(E_{\fm})$ and
$\mu_W |\tilde{S}(E_{\fm})$ of the
canonical homomorphisms $\mu_S :\, T(E_{\fm})\rightarrow
S(E_{\fm})$ and $\mu_W :\, T(E_{\fm})\rightarrow W(E_{\fm})$
are $A_{\fm}$-module isomorphisms.

(b) $\{ \mu_S (e_I) |\, I\in \Upsilon_0\}$ and
$\{ \mu_W (e_I) |\, I\in \Upsilon_0\}$ are $A_{\fm}$-bases of $S(E_{\fm})$ and $W(E_{\fm})$ respectively.

(c) $T(E_{\fm})=\tilde{S}(E_{\fm})\oplus (\mathcal{I}_W)_{\fm}$.
\end{prop}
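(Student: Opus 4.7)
My plan is to reduce everything to the classical PBW theorem over a free module, exploiting the fact that $E_\fm=L_\fm\oplus L'_\fm$ is $A_\fm$-free of rank $\nu+\nu'$. I would work in the distinguished basis $\{e_i\}$ fixed just before the statement, viewing $\omega_\fm$ as a (possibly degenerate) antisymmetric $A_\fm$-bilinear pairing which vanishes on $L_\fm\times L_\fm$ and on $L'_\fm\times L'_\fm$ by the Lagrangianity of $L$ and $L'$.

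Part (a) for $\mu_S$ is immediate: since $E_\fm$ is free, $S(E_\fm)\cong A_\fm[e_1,\ldots,e_{\nu+\nu'}]$ and its standard monomial $A_\fm$-basis is precisely $\{\mu_S(e_I):I\in\Upsilon_0\}$, so $\mu_S|\tilde S(E_\fm)$ is an $A_\fm$-linear isomorphism onto $S(E_\fm)$. For $\mu_W$ I would first handle surjectivity by induction on the lexicographic pair (length $p$, number of inversions) attached to any tensor monomial $e_J=e_{j_1}\otimes\cdots\otimes e_{j_p}$: whenever $j_k<j_{k+1}$ for some adjacent pair, the Weyl relation rewrites $\mu_W(e_J)=\mu_W(e_{J'})+\lambda\omega_\fm(e_{j_k},e_{j_{k+1}})\,\mu_W(e_{J''})$ where $J'$ has one fewer inversion and $J''$ has length $p-2$, so induction completes the reduction to $\tilde S(E_\fm)$.

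The hard part is injectivity of $\mu_W|\tilde S(E_\fm)$, the genuine PBW content. My approach would be Bergman's diamond lemma applied to the rewriting rule $e_i\otimes e_j\mapsto e_j\otimes e_i+\lambda\omega_\fm(e_i,e_j)$ for $i<j$: termination is ensured by the lexicographic order above, and the only overlap ambiguities come from triples $e_i\otimes e_j\otimes e_k$ with $i<j<k$, for which confluence reduces to a short three-line check with the would-be Jacobi obstruction vanishing automatically because $\lambda\omega_\fm$ is scalar-valued. Confluence of the reductions forces $A_\fm$-linear independence of $\{\mu_W(e_I):I\in\Upsilon_0\}$. Equivalently and perhaps more economically, one may invoke the general PBW theorem of~\cite{Sridharan} for tensor algebras modulo scalar-valued commutator relations, of which the present case is a direct instance.

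Parts (b) and (c) follow quickly. Part (b) is a reformulation of (a). For (c), the short exact sequence~\eqref{shes-1} localized at $\fm$ shows $\ker\mu_W=(\mathcal{I}_W)_\fm$; combined with the fact that $\mu_W$ restricts to an isomorphism on the submodule $\tilde S(E_\fm)$, this gives the internal direct sum decomposition $T(E_\fm)=\tilde S(E_\fm)\oplus(\mathcal{I}_W)_\fm$.
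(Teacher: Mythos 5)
Your proposal is correct; note, though, that the paper does not actually prove Proposition~\ref{PBW} at all, but simply records it as ``a variant of the Poincar\'e--Birkhoff--Witt theorem'' with a citation to Sridharan's paper on filtered algebras. Your argument supplies what that citation elides: the $\mu_S$ case is handled by the standard monomial basis of the polynomial algebra $S(E_\fm)\cong A_\fm[e_1,\ldots,e_{\nu+\nu'}]$; surjectivity of $\mu_W|\tilde S(E_\fm)$ by the inversion-count induction; and injectivity via Bergman's diamond lemma, where your confluence check for $e_i\otimes e_j\otimes e_k$, $i<j<k$, is precisely the place where the scalar-valuedness of $\lambda\omega_\fm$ (i.e.\ the abelian/central nature of the ``Lie bracket'') makes the cocycle obstruction vanish. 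Your derivation of (b) from (a) tacitly uses that $\{e_I:I\in\Upsilon_0\}$ is itself an $A_\fm$-basis of $\tilde S(E_\fm)$, which holds because $E_\fm$ is free so $T(E_\fm)$ has the tensor monomials as a basis and $\tilde S(E_\fm)$ is spanned by a subcollection of these; it is worth stating this explicitly. Your deduction of (c) from surjectivity-plus-injectivity of $\mu_W|\tilde S(E_\fm)$ and $\ker\mu_W=(\mathcal{I}_W)_\fm$ (the latter from localizing the exact sequence~\eqref{shes-1}) is exactly right. In short: the paper delegates to Sridharan, whereas you give a self-contained diamond-lemma proof (and correctly observe the delegation is also available); the diamond-lemma route is closer to the combinatorics underlying the later filtration $\Upsilon_i$, while Sridharan's filtered-algebra route is the one the author actually leans on. Both are standard and equivalent in outcome.
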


\begin{prop}
Under the assumptions of Prop.~\ref{PBW}, the choice of  bases in  $L_{\fm}$ and ${L'}_{\fm}$ does not affect the resulting isomorphism $W(E_{\fm})\stackrel{\cong}{\rightarrow} S(E_{\fm})$.
\end{prop}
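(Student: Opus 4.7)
The plan is to exhibit a basis-free description of the isomorphism $\pi_\fm: S(E_\fm) \to W(E_\fm)$ from Prop.~\ref{PBW} and then show that the PBW construction reproduces the same map in any chosen bases. The key input is that $L_\fm$ and ${L'}_\fm$ are Lagrangian, so $\omega_\fm$ vanishes on $L_\fm \times L_\fm$ and on ${L'}_\fm \times {L'}_\fm$. Hence the composite $T(L_\fm) \hookrightarrow T(E_\fm) \twoheadrightarrow W(E_\fm)$ kills every commutator relation $x \otimes y - y \otimes x$ with $x,y \in L_\fm$ and therefore factors through a canonical $A_\fm$-algebra homomorphism $j_L: S(L_\fm) \to W(E_\fm)$; likewise one obtains $j_{L'}: S({L'}_\fm) \to W(E_\fm)$. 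Neither map uses any choice of basis.

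From the direct sum decomposition $E_\fm = L_\fm \oplus {L'}_\fm$ one has the canonical $A_\fm$-module isomorphism $\iota: S({L'}_\fm) \otimes_{A_\fm} S(L_\fm) \xrightarrow{\cong} S(E_\fm)$, with the ordering of factors chosen to match the PBW convention below. I would then define
\[ \Phi: S(E_\fm) \xrightarrow{\iota^{-1}} S({L'}_\fm) \otimes_{A_\fm} S(L_\fm) \xrightarrow{j_{L'} \otimes j_L} W(E_\fm) \otimes_{A_\fm} W(E_\fm) \xrightarrow{\circ} W(E_\fm), \]
which is manifestly independent of any basis of $L_\fm$ or ${L'}_\fm$.

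To finish, I would verify $\Phi = \pi_\fm$ by evaluating both sides on the $A_\fm$-basis $\{\mu_S(e_I) : I \in \Upsilon_0\}$ of $S(E_\fm)$. With the ordering of Prop.~\ref{PBW} (every $\alpha \leq \nu < \beta$), a nonincreasing sequence $I$ splits uniquely as a concatenation $(J,K)$ with $J$ nonincreasing in the $\beta$'s and $K$ nonincreasing in the $\alpha$'s; under $\iota^{-1}$ this gives $\mu_S(e_I) \mapsto \mu_S(e_J) \otimes \mu_S(e_K)$, so $\Phi(\mu_S(e_I)) = \mu_W(e_J) \circ \mu_W(e_K)$. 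Because the $e_\beta$'s pairwise commute in $W(E_\fm)$ and likewise the $e_\alpha$'s, this product reassembles to $\mu_W(e_I) = \pi_\fm(\mu_S(e_I))$. $A_\fm$-linearity then forces $\Phi = \pi_\fm$, so $\pi_\fm$ depends only on the decomposition $E_\fm = L_\fm \oplus {L'}_\fm$ and not on any basis of its summands. The only step that needs care is the existence of $j_L$ and $j_{L'}$, but this is immediate from the Lagrangian vanishing of $\omega_\fm$, so I do not anticipate any serious obstacle.
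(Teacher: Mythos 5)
Your proof is correct, and it takes a genuinely different route than the paper's. The paper argues directly with the basis change: it considers a new basis $e'_i = A_i^j e_j$ with $A_\alpha^\beta = A_\beta^\alpha = 0$ (so the change respects the splitting $E_\fm = L_\fm \oplus L'_\fm$), observes that by isotropy of $L_\fm$ and $L'_\fm$ one can pass from an element $a' \in \tilde{S}'(E_\fm)$ to an element $a \in \tilde{S}(E_\fm)$ with $\mu_W(a) = \mu_W(a')$ and $\mu_S(a) = \mu_S(a')$, and then invokes the uniqueness from Prop.~\ref{PBW}(c) to conclude the two induced isomorphisms coincide. You instead exhibit a \emph{basis-free} map $\Phi: S(E_\fm) \to W(E_\fm)$, built canonically from the splitting via $j_{L'} \otimes j_L$ composed with the product $\circ$, and then verify $\Phi = \pi_\fm$ on the PBW basis. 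Both hinge on the same structural fact --- the isotropy of $L_\fm$ and $L'_\fm$ under $\omega_\fm$ --- but your approach has the advantage of making the basis-independence manifest \emph{a priori} (and of clarifying that $\pi_\fm$ really only depends on the ordered pair of Lagrangian summands), at the modest cost of needing the small verification on basis elements. The paper's argument is shorter but less transparent about \emph{why} basis-independence holds. One thing worth stating explicitly in your writeup: the ordering $S(L'_\fm) \otimes S(L_\fm)$ (with $L'$ on the left) is forced by the nonincreasing convention of Prop.~\ref{PBW}, since $\beta > \alpha$; if one reversed the order, $\Phi$ would agree with a different normal-ordering isomorphism, so $\pi_\fm$ does depend on the choice of which summand plays the role of $L$ versus $L'$, just not on the bases within them --- which is exactly the content of the proposition.
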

\begin{proof}
Let $\{ e'_{i}=A_{i}^j e_j\}$ be a new basis in $E_{\fm}$ such that
$A_{\alpha}^\beta=A^\alpha_{\beta}=0$  and let $\tilde{S}'(E_{\fm})$ be the submodule in $T(E_{\fm})$ generated by $ \{ e'_{I}|\, I\in \Upsilon_0\}$. Since both $L_{\fm}$ and ${L'}_{\fm}$ are isotropic w.r.t. $\omega_\fm$, we see that for any element $a'\in \tilde{S}'(E_{\fm})$ there exists an element $a\in \tilde{S}(E_{\fm})$  such that $\mu_W (a)=\mu_W (a')$ and $\mu_S (a)=\mu_S (a')$. Due to Prop.~\ref{PBW}(c) such an element is unique and the map $a'\mapsto a$ is an isomorphism.
\end{proof}

\section{Koszul complex}

Let
$$ a=x_1 \otimes \ldots \otimes x_m \otimes y_1 \wedge \ldots\wedge y_n \in
T^m (E)\otimes \wedge^n E^\ast.$$ Define the Koszul differential
of bidegree $(-1,1)$ on $T^\bullet (E)\otimes \wedge^\bullet
E^\ast$ as
$$\delta a=\sum\limits_i x_1 \otimes\ldots\otimes \hat{x}_i \otimes\ldots\otimes
x_m \otimes u(x_i) \wedge y_1 \ldots \wedge y_n .$$

 Since $E^*$ is   projective  and so  $\wedge E^*$  is, we see that the functor $\otimes\wedge E^*$ is exact and due to~(\ref{shes-1}) we have a short exact sequence of $A$-modules
\begin{equation}
\xymatrix@1{ 0 \ar[r] & \mathcal{I}_W \otimes \wedge E^*
\ar[r] & T(E) \otimes
\wedge E^* \ar[r] & W(E) \otimes
\wedge E^* \ar[r] & 0. }
\end{equation}
It is easily seen that $\delta$ preserves $\mathcal{I}_W \otimes
\wedge E^\ast$, so it induces a well-defined differential on
$W(E)\otimes\wedge E^\ast$. It is well known
that $u$ is an isomorphism due to the nondegeneracy of $\omega$. So
we can define the so-called contracting homotopy of bidegree
$(1,-1)$ on $S^\bullet (E)\otimes\wedge^\bullet E^\ast$ which to
an element
$$a=x_1 \odot\ldots\odot x_m \otimes y_1 \wedge\ldots\wedge y_n
\in S^m (E)\otimes \wedge^n E^*,$$
where $\odot$ is the multiplication in $S(E)$, assigns the element
$$\delta^{-1} a=\frac{1}{m+n}\sum\limits_i (-1)^{i-1} u^{-1} (y_i)\odot x_1 \odot\ldots\odot x_m \otimes y_1                                                                 \wedge\ldots\wedge \hat{y}_i
\wedge\ldots\wedge y_n $$
for $m+n>0$ and $\delta^{-1} a=0$ for $m=n=0$.

Let $a=\sum\limits_{m,n\geq 0} a_{mn}$, where $a_{mn}\in S^m (E)\otimes \wedge^n E^*$ and $\tau:\ a\mapsto a_{00}$ is the projection onto the component of bidegree $(0,0)$. Carry $\delta$ to $S(E)\otimes \wedge E^*$ using the canonical homomorphism $T(E) \otimes\wedge E^* \rightarrow S(E)\otimes \wedge E^*$.  Then it is well known that the following equality
\begin{equation}\label{eq1}
\delta\delta^{-1}+\delta^{-1}\delta+\tau=\mathop{Id}
\end{equation}
holds. Carry the grading of $S(E)$ to $W(E)$ using the
isomorphism $S(E)\cong W(E)$. Since localization is a homomorphism of graded modules and $W^1 (E_\fm)\cong E_\fm$, we see that $W^1 (E)\cong E$ and we will identify them.

\begin{prop}\label{d-comm}
$\delta$ commutes with the $A$-module isomorphism $\pi \otimes \mathop{\mathrm{Id}}$ from Theorem~\ref{S-W-isom}.
\end{prop}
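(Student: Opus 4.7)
The plan is to reduce the claim to the local case by localizing at every maximal ideal $\fm \in \mathop{\mathrm{Specm}} A$. Since two $A$-module homomorphisms coincide iff they coincide after localization at every $\fm$, and since the tensor, symmetric, and exterior algebra functors, the map $u$, and hence the Koszul differential $\delta$ all commute with localization, it suffices to prove $\delta \circ (\pi_\fm \otimes \mathop{\mathrm{Id}}) = (\pi_\fm \otimes \mathop{\mathrm{Id}}) \circ \delta$ on $S(E_\fm) \otimes \wedge E^*_\fm \to W(E_\fm) \otimes \wedge E^*_\fm$ for each $\fm$.

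Next I would unwind the explicit description of $\pi_\fm$ coming from the proof of Theorem~\ref{S-W-isom}. By Prop.~\ref{PBW}(a), both $\mu_S$ and $\mu_W$ restrict to $A_\fm$-module isomorphisms on the PBW submodule $\tilde{S}(E_\fm) \subset T(E_\fm)$, and $\pi_\fm$ is the unique map satisfying $\pi_\fm \circ (\mu_S|_{\tilde{S}(E_\fm)}) = \mu_W|_{\tilde{S}(E_\fm)}$. Together with the observation that the differentials $\delta$ on $S(E_\fm) \otimes \wedge E^*_\fm$ and on $W(E_\fm) \otimes \wedge E^*_\fm$ are induced from the Koszul differential on $T(E_\fm) \otimes \wedge E^*_\fm$ via $\mu_S \otimes \mathop{\mathrm{Id}}$ and $\mu_W \otimes \mathop{\mathrm{Id}}$ respectively, this reduces the problem to showing that $\delta$ carries $\tilde{S}(E_\fm) \otimes \wedge E^*_\fm$ into itself.

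That last assertion is immediate from the formula for $\delta$: for a nonincreasing sequence $I = (i_1, \ldots, i_p) \in \Upsilon_0$ and $\omega \in \wedge E^*_\fm$,
$$\delta(e_I \otimes \omega) = \sum_k e_{i_1} \otimes \cdots \otimes \widehat{e_{i_k}} \otimes \cdots \otimes e_{i_p} \otimes u(e_{i_k}) \wedge \omega,$$
and deleting a single index from a nonincreasing sequence leaves another nonincreasing sequence, hence an element of $\Upsilon_0$; so every summand still lies in $\tilde{S}(E_\fm) \otimes \wedge E^*_\fm$. Applying the termwise identity $\pi_\fm \circ \mu_S = \mu_W$ to $\delta(a \otimes \omega)$ for $a \in \tilde{S}(E_\fm)$ now yields $(\pi_\fm \otimes \mathop{\mathrm{Id}}) \delta(\mu_S(a) \otimes \omega) = (\mu_W \otimes \mathop{\mathrm{Id}}) \delta(a \otimes \omega) = \delta((\mu_W \otimes \mathop{\mathrm{Id}})(a \otimes \omega)) = \delta \circ (\pi_\fm \otimes \mathop{\mathrm{Id}})(\mu_S(a) \otimes \omega)$, and since $\mu_S$ surjects onto $S(E_\fm)$ this covers every element.

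The main obstacle is purely a bookkeeping one: confirming that the isomorphism $\pi$ produced in Theorem~\ref{S-W-isom} is locally realized by $\mu_W \circ \mu_S^{-1}$ on the PBW submodule, and that $\delta$ on $S$ and $W$ sits inside a commuting diagram with the canonical projections from $T$. Once these two compatibilities are verified, the proposition is essentially formal, since $\delta$ is a derivation-like operation on the tensor algebra that respects every quotient compatible with the tensor structure.
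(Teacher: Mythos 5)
Your proof is correct and follows essentially the same strategy as the paper: localize at each $\fm\in\mathop{\mathrm{Specm}}A$ to reduce to the free case, observe that both $\mu_S|_{\tilde S(E_\fm)}$ and $\mu_W|_{\tilde S(E_\fm)}$ are isomorphisms intertwining the Koszul differential on $T(E_\fm)\otimes_{A_\fm}\wedge E^*_\fm$ with those on $S$ and $W$, and then use that $\delta$ preserves $\tilde S(E_\fm)\otimes_{A_\fm}\wedge E^*_\fm$ to conclude. Your explicit justification for that last preservation (deleting one index from a nonincreasing sequence yields a nonincreasing sequence, hence stays in $\Upsilon_0$) is a welcome unpacking of what the paper leaves as ``easily seen.''
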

\begin{proof}
$\omega_{\fm}$  induces the homomorphism $u_{\fm}:\ E_{\fm}\rightarrow E^*_{\fm}$ which makes the following diagram  commute:
\begin{equation}\label{u-delta}
    \xymatrix{
 E \ar[r]^u \ar[d] & E^* \ar[d] \\
E_{\fm} \ar[r]^{u_{\fm}} & E^*_{\fm}.
}
\end{equation}
(note that $u_\fm$ needs not be an isomorphism).
Then we can define the Koszul differential $\delta_\fm$ on $W(E_{\fm})\otimes_{A_{\fm}} \wedge E^*_{\fm}$
which commutes with the composition of the localization map and the isomorphism $(W(E)\otimes \wedge E^*)_\fm \cong W(E_\fm)\otimes_{A_\fm} \wedge E^*_\fm$.

Let $\iota_m \ (m=1,2)$ be the natural embedding of the $m$th direct summand in the rhs of Prop.~\ref{PBW}(c) into $T(E_{\fm})$, so
$\mu_{S,W}|\tilde{S}(E_{\fm})=\mu_{S,W}\iota_1$. Then
from Prop.~\ref{PBW}(c) it follows
that the short exact sequence of $A_{\fm}$-modules
$$\xymatrix@1{
0 \ar[r] & (\mathcal{I}_W)_{\fm} \ar[r]^{\iota_2} &
 T(E_{\fm}) \ar[r]^{\mu_W} & W(E_{\fm}) \ar[r] & 0
}
$$
splits, whence we have another short exact sequence of $A_{\fm}$-modules
\begin{equation}\label{split}
\xymatrix@1@C+9pt{ 0 \ar[r] & (\mathcal{I}_W)_{\fm} \otimes_{A_{\fm}} \wedge E^*_\fm \ar[r]^(.50){\iota_2 \otimes\mathop{{\rm id}}} &
T(E_{\fm}) \otimes_{A_\fm} \wedge E^*_{\fm}
\ar[r]^(.48){\mu_W \otimes \mathop{{\rm id}}} &
W(E_{\fm}) \otimes_{A_{\fm}} \wedge E^*_{\fm} \ar[r] & 0
}
\end{equation}
and $\iota_1\otimes\mathop{{\rm id}}$ is the natural embedding
of $\tilde{S}(E_{\fm})\otimes_{A_{\fm}}\wedge E^*_{\fm}$ into
$T(E_{\fm})\otimes_{A_{\fm}}\wedge E^*_{\fm}$.

It is easily seen that $\delta_\fm$ preserves
$\tilde{S}(E_{\fm})\otimes_{A_{\fm}} \wedge E^*_{\fm}$,  so each arrow of the following commutative diagram of $A_{\fm}$-modules commutes with $\delta_\fm$.
$$\xymatrix{
 & T(E_\fm)\otimes_{A_{\fm}} \wedge E^*_{\fm} \ar[ddl]_{\mu_S
\otimes \mathop{{\rm id}}} \ar[ddr]^{\mu_W \otimes\mathop{{\rm id}}} \\
 & \tilde{S}(E_{\fm})\otimes_{A_{\fm}} \wedge E^*_\fm \ar[u]|{\iota_1
\otimes \mathop{{\rm id}}} \ar[dl]^{\mu_S \iota_1 \otimes
 \mathop{{\rm id}}}|(.35)\cong \ar[dr]_{\mu_W \iota_1 \otimes
\mathop{{\rm id}}}|(.35)\cong \\ S(E_{\fm})\otimes_{A_{\fm}} \wedge E^*_{\fm}
&& W(E_\fm)\otimes_{A_{\fm}} \wedge E^*_\fm. }$$
Then $\delta_\fm$ commutes with the $A_{\fm}$-module isomorphism $\pi_\fm \otimes \mathop{{\rm id}}:=\mu_W \iota_1 (\mu_S \iota_1)^{-1}\otimes \mathop{{\rm id}}$. Due to the construction of $\pi$ we have $\left( (\pi\otimes \mathop{{\rm id}})\delta a-\delta (\pi\otimes \mathop{{\rm id}}) a \right)_\fm= (\pi_\fm \otimes \mathop{{\rm id}})\delta_\fm a_\fm-\delta_\fm (\pi_\fm \otimes \mathop{{\rm id}}) a_\fm$ for all $a\in S(E)\otimes \wedge E^*$ and $\fm\in \mathop{\mathrm{Specm}}A$. So, $(\pi\otimes \mathop{{\rm id}})\delta =\delta (\pi\otimes \mathop{{\rm id}})$, which proves the proposition.
\end{proof}

Carry the contracting homotopy $\delta^{-1}$ and the projection $\tau$ from $S(E)\otimes\wedge E^*$ to $W(E)\otimes\wedge E^*$ via the
isomorphism of Theorem~\ref{S-W-isom}, then the equality~(\ref{eq1}) remains true due to Prop.~\ref{d-comm}.
Let $\delta W^\bullet =(W(E)\otimes \wedge^n E^*, \ \delta)$, then
from~(\ref{eq1}) it follows that
\begin{equation}\label{eq4} H^0 (\delta W^\bullet)=A, \qquad H^n
(\delta W^\bullet)=0, \  n>0.
\end{equation}

\section{The ideals}

Let $\mathcal{I}_\wedge$ be the ideal in $\wedge E^*$ whose elements annihilate the polarization $L$, i.e. $\mathcal{I}_\wedge=\sum\nolimits_{n=1}^\infty
\mathcal{I}_\wedge^n$, where
$$\mathcal{I}_\wedge^n =\{ \alpha\in \wedge^n E^* |\, \alpha (x_1,\ldots,x_n)=0
\ \forall x_1,\ldots,x_n \in L \}.$$
It is well known that locally (i.e. in a certain neighborhood of an arbitrary point of $M$) $\mathcal{I}_\wedge$ is generated by $N$ independent 1-forms which are the basis of $\mathcal{I}_\wedge^1$. On the other hand, $L$ is lagrangian, so for the dimensional reasons we obtain $u(L)=\mathcal{I}_\wedge^1$, so
\begin{equation}\label{eq5}
\mathcal{I}_\wedge=(u(L)).
\end{equation}

Let $\mathcal{I}_L$ be the left ideal in $W(E)$ generated by the elements of $L$. Since  $\wedge E^*$ is projective, we have an  injection $\mathcal{I}_L\otimes\wedge E^*\hookrightarrow W(E)\otimes\wedge E^*$.

Consider $L^*$ as the submodule in $E^*$ whose elements annihilate $L'$. Then considering a certain neighborhood of an arbitrary point of $M$ we see that
\begin{equation}\label{E-L-I}
\wedge E^*=\wedge L^* \oplus \mathcal{I}_\wedge,
\end{equation}
 so we have an injection $W(E)\otimes\mathcal{I}_\wedge \hookrightarrow W(E)\otimes\wedge E^*$. Then we can define the left ideal
$\mathcal{I}=\mathcal{I}_L\otimes\wedge
E^* +W(E)\otimes\mathcal{I}_\wedge$ in
$W(E)\otimes\wedge E^*$ and from~(\ref{eq5}) it follows that
\begin{equation}\label{eq6}
\delta(\mathcal{I})\subset \mathcal{I}.
\end{equation}

\begin{definition}
Let $\mathbb{N}_0=\mathbb{N}\cup \{0\}$. A semigroup $(S,\vee)$ is
called \textit{filtered} if there exists a decreasing filtration $S_i,\ i\in \mathbb{N}_0$ on $S$  such that $S_0 =S$ and $S_i \vee
S_j \subset S_{i+j}\ \forall i,j$. Suppose $I=(i_1,\ldots,i_m)$ and $J=(j_i,\ldots,j_n)$ are in $\Upsilon_0$ and let $I\vee J$ be
the set $\{ i_1,\ldots,i_m, j_i,\ldots,j_n \}$ arranged in
descending order. Then $(\Upsilon_0,\vee)$ becomes a semigroup
filtered by $\Upsilon_i$.
\end{definition}

\begin{lemma}\label{lem1}
Let $\mathcal{I}^{(S)}_L$ be the ideal  in $S(E)$ generated by the
elements of $L$, then $\pi(\mathcal{I}^{(S)}_L)= \mathcal{I}_L$
under the isomorphism  of Theorem~\ref{S-W-isom}.
\end{lemma}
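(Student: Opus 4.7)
The plan is to reduce the equality of $A$-submodules to the local case at every $\fm\in\mathop{\mathrm{Specm}}A$: by the standard local-global principle it suffices to show $(\pi(\mathcal{I}^{(S)}_L))_\fm=(\mathcal{I}_L)_\fm$ for each maximal ideal $\fm$. The proof of Theorem~\ref{S-W-isom} already exhibits the localized isomorphism $\pi_\fm=\mu_W\iota_1(\mu_S\iota_1)^{-1}:S(E_\fm)\to W(E_\fm)$ and shows that $\pi$ commutes with localization. Exactness of $A_\fm\otimes{-}$ further identifies $(\mathcal{I}^{(S)}_L)_\fm$ with the ideal in $S(E_\fm)$ generated by $L_\fm$, and $(\mathcal{I}_L)_\fm$ with the left ideal in $W(E_\fm)$ generated by $L_\fm$.

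I would then pass to the PBW basis of Prop.~\ref{PBW}. The symmetric ideal generated by $L_\fm$ has $A_\fm$-basis $\{\mu_S(e_I):I\in\Upsilon_1\}$, so its image under $\pi_\fm$ is the $A_\fm$-span of $\{\mu_W(e_I):I\in\Upsilon_1\}$. One inclusion is immediate: if $I=(i_1,\ldots,i_p)$ is nonincreasing with $\alpha$-length at least one, then the smallest entry $i_p\le\nu$ gives $e_{i_p}\in L_\fm$, and the associative expansion $\mu_W(e_I)=\mu_W(e_{i_1}\otimes\ldots\otimes e_{i_{p-1}})\circ e_{i_p}$ exhibits $\mu_W(e_I)$ as an element of $W(E_\fm)\circ L_\fm=(\mathcal{I}_L)_\fm$.

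The nontrivial inclusion reduces by $A_\fm$-linearity to showing $\mu_W(e_J)\circ e_\alpha$ lies in the $\Upsilon_1$-span for every nonincreasing $J\in\Upsilon_0$ and every $\alpha\le\nu$. Rearranging $(j_1,\ldots,j_p,\alpha)$ into nonincreasing form requires only swapping $e_\alpha$ leftwards past entries $e_{j_k}$ with $j_k<\alpha\le\nu$, i.e.\ with $j_k$ itself an $\alpha$-index. For any such pair, formula (\ref{o-o-m}) together with the lagrangian property $\omega(L,L)=0$ yields $\omega_\fm(e_{j_k},e_\alpha)=0$, so the Weyl commutation $e_{j_k}\circ e_\alpha=e_\alpha\circ e_{j_k}+\lambda\omega_\fm(e_{j_k},e_\alpha)$ is free of any scalar correction. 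Hence $\mu_W(e_J)\circ e_\alpha=\mu_W(e_{J'})$ for the nonincreasing rearrangement $J'$ of $(J,\alpha)$, and $J'\in\Upsilon_1$ because $\alpha\le\nu$. The only delicate point is the bookkeeping verifying that localization commutes with all the ideal-generation and tensor constructions; once that is set up, the combinatorial content is forced by the isotropy of $L_\fm$.
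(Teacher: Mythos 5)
Your proof is correct and follows essentially the same route as the paper's: localize at each maximal ideal $\fm$, identify $(\mathcal{I}_L)_\fm$ and $(\mathcal{I}^{(S)}_L)_\fm$ with the left ideals in $W(E_\fm)$ and $S(E_\fm)$ generated by $L_\fm$, and use the PBW basis of Prop.~\ref{PBW}(b) together with the isotropy of $L_\fm$ (so that $\alpha$-indexed generators commute in $W(E_\fm)$) to show both localized ideals coincide with $\mathop{\mathrm{span}}_{A_\fm}\{\mu(e_I):I\in\Upsilon_1\}$. The paper records the same reordering observation in the compact form $\mu(e_I)\circ e_\alpha=\mu(e_{I\vee\{\alpha\}})$ and then invokes the filtration $\tilde S_1(E_\fm)$; your explicit two-inclusion argument is just a spelled-out version of that.
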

\begin{proof}
It is easily seen that  $(\mathcal{I}_L)_{\fm}$ [resp.
$(\mathcal{I}^{(S)}_L)_{\fm}$] is a left ideal in $W(E_{\fm})$ [resp.
in $S(E_{\fm})$] generated by the elements of $L_{\fm}$.
Since $L_{\fm}$ is isotropic w.r.t. $\omega_\fm$, we have $e_{\alpha_1}\circ
e_{\alpha_2}=e_{\alpha_2}\circ e_{\alpha_1}$
$\forall\alpha_1,\alpha_2 \in \{ 1,\ldots,\nu \}$, thus for any $I\in\Upsilon_0$ and $\forall\alpha \in \{ 1,\ldots,\nu \}$ we have
$\mu(e_I)\circ e_\alpha=\mu(e_{I\vee\{\alpha\}})$ and
$I\vee\{\alpha\}\in \Upsilon_1$. Then from Prop.~\ref{PBW}(b) it
follows that $(\mathcal{I}_L)_{\fm}  \subset\mathop{{\rm span}}_{A_{\fm}} \{ \mu_W (e_I) |\, I\in \Upsilon_1\}$. On the other hand, if $I=(i_1
,\ldots,i_p)\in \Upsilon_1$ then $1\leq i_p \leq n$, so $\mu_W
(e_I)\in(\mathcal{I}_L)_{\fm}$. Then $\mathop{{\rm span}}_{A_{\fm}} \{ \mu_W (e_I) |\, I\in \Upsilon_1\}\subset (\mathcal{I}_L)_{\fm}$  and we obtain $(\mathcal{I}_L)_{\fm}=\mu_W \iota_1 (\tilde{S}_1 (E_{\fm}))$,
where $\tilde{S}_i (E_{\fm})= \mathop{{\rm span}}_{A_{\fm}} \{ e_I |\, I\in \Upsilon_i\},\ i\in\mathbb{N}_0$ is a decreasing filtration on
$\tilde{S}(E_{\fm})$. Analogously $(\mathcal{I}_L^{(S)})_{\fm}=\mu_S \iota_1 (\tilde{S}_1 (E_{\fm}))$, which proves the lemma.
\end{proof}
From~(\ref{eq5}) it is easily seen that $\delta^{-1}$ preserves the submodule $\mathcal{I}^{(S)}_L\otimes\wedge E^*
+S(E)\otimes\mathcal{I}_\wedge$ of $S(E)\otimes\wedge E^*$,
then using Lemma~\ref{lem1}  we obtain
\begin{equation}\label{eq7}
\delta^{-1}(\mathcal{I})\subset \mathcal{I}.
\end{equation}
\begin{remark}
The choice of $\tilde{S}(E)$ in  Prop.~\ref{PBW} is crucial for our
construction of the contracting homotopy of $\delta W^\bullet$. The ordinary choice of the submodule $S'(E)$ of all symmetric tensors in $T(E)$ instead
of $\tilde{S}(E)$ yields another contracting homotopy of
$\delta W^\bullet$ which does not preserve $\mathcal{I}$.
\end{remark}
Suppose $\wp$ is a leaf of the distribution $\mathcal{D}$, $\Phi=\{ f\in A|\ f|\wp=0\}$ is the vanishing ideal of $\wp$ in $A$, $\mathcal{I}_\Phi$ is the necessarily two-sided ideal in $W(E)\otimes\wedge E^*$ generated by elements of $\Phi$, and $\mathcal{I}_{\text{fin}}=\mathcal{I}+\mathcal{I}_\Phi$ is a
homogeneous left ideal in $W(E)\otimes\wedge E^*$. Then due
to~(\ref{eq6}),(\ref{eq7}) we can define the subcomplex $\delta
\mathcal{I}_{\text{fin}}^\bullet=
(\mathcal{I}_{\text{fin}},\delta)$ with the same contracting
homotopy $\delta^{-1}$. Note that $\tau
(\mathcal{I}_{\text{fin}})=\Phi$, then using~(\ref{eq1}) we obtain
\begin{equation}\label{eq13}
H^0 (\delta\mathcal{I}_{\text{fin}}^\bullet)=\Phi, \qquad H^n
(\delta\mathcal{I}_{\text{fin}}^\bullet)=0,\ n>0
\end{equation}

\section{Connection}

Let $\nabla$ be the exterior derivative on $\wedge E^*$ which to an element $\alpha\in \wedge^{n-1} E^*$ assigns the element
\begin{equation}\label{eq8}
\begin{split}
(\nabla\alpha)(x_1,\ldots,x_n)=
\sum\limits_{1\leq i<j\leq n}
(-1)^{i+j} \alpha([x_i,x_j],x_1,\ldots,\hat{x}_i,\ldots,\hat{x}_j,\ldots x_n)
\\ +\sum\limits_{1\leq i\leq n} (-1)^{i-1}x_i \alpha
(x_1,\ldots,\hat{x}_i,\ldots, x_n).
\end{split}
 \end{equation}
Let $\nabla_x y\in E,\ x,y\in E$ be a connection on $M$, then we
can extend $\nabla_x$ to $T(E)$ by the Leibniz rule. It is well known that  a symplectic
connection preserve $\mathcal{I}_W$ for all $x\in E$, so it
induces a well-defined derivation on $W(E)$. Suppose $\{ (e_{\tilde{\alpha}},e^{\tilde{\alpha}})| \, \tilde{\alpha}=1,\ldots,\tilde{\nu} \}$ and $\{ (e_{\tilde{\beta}},e^{\tilde{\beta}})| \, \tilde{\beta}=\tilde{\nu},\ldots,\tilde{\nu}+\tilde{\nu}\vphantom{\nu}' \}$ are projective bases in $L$ and $L'$ respectively. Considering $L^*$ and ${L'}^*$ as the submodules in $E^*$ whose elements annihilate $L'$ and $L$ respectively, we see that ${L'}^* =\mathcal{I}_\wedge^1$ and $\{ (e_{\tilde{\imath}},e^{\tilde{\imath}})| \, \tilde{\imath}=1,\ldots,\tilde{\nu}+\tilde{\nu}\vphantom{\nu}' \}$ is a projective basis in $E$. Consider  the mapping~\cite{Farkas}
\begin{equation}\label{nabla-def}
 \nabla:\ W(E)\rightarrow W(E)\otimes \wedge^1 E^*, \quad
\nabla a = \sum\limits_{\tilde{\imath}=1}^{\tilde{\nu}+\tilde{\nu}\vphantom{\nu}'}
 (\nabla_{e_{\tilde{\imath}}}a) \otimes e^{\tilde{\imath}}.
\end{equation}
It is well known that $\nabla$
may be extended to a $\mathbb{R}[[\lambda]]$-linear derivation of
bidegree $(0,1)$ of the whole algebra
$W^\bullet(E)\otimes\wedge^\bullet E^*$ whose restriction to
$\wedge E^*$ coincides with~(\ref{eq8}).

We say that a polarization (or, more generally, distribution)
$\mathcal{D}$ is self-parallel w.r.t. $\nabla$ iff
\begin{equation}\label{eq9}
\nabla_x y\in L, \quad x,y\in L.
\end{equation}
For a given $\mathcal{D}$, a torsion-free connection which
obeys~(\ref{eq9}) always exists (\cite{BF}, Theorem~5.1.12).
Proceeding along the same lines as in the proof of \cite{Xu},
Lemma 5.6, we obtain a symplectic connection
 on $M$  which also obeys~(\ref{eq9}). Then from~(\ref{nabla-def}),(\ref{eq9}) it follows that $\nabla L\in \mathcal {I}$, so $\nabla\mathcal{I}_L\subset
\mathcal{I}$ since $\nabla$ is a derivative. On the other hand, the involutivity of $L$ together
with~(\ref{eq8}) yield $\nabla\mathcal{I}_\wedge
\subset\mathcal{I}_\wedge$ (Frobenius theorem), so we finally
obtain
\begin{equation}\label{eq10}
\nabla\mathcal{I}\subset\mathcal{I}.
\end{equation}

It is easily seen that the vector fields of $L$
preserve $\Phi$, i.e. $(\nabla f)(x)\in\Phi \quad \forall f\in \Phi, x\in L$, so $\nabla\Phi\in\mathcal{I}_\Phi+
\mathcal{I}^1_\wedge$ and we finally obtain
\begin{equation}\label{eq11a}
\nabla\mathcal{I}_\Phi \subset\mathcal{I}_{\text{fin}}.
\end{equation}

The following result is well known (see Theorem~3.3 of~\cite{Farkas}).
\begin{lemma}\label{gamma}
Any $A$-linear derivation of $W(E)\otimes\wedge E^*$ is quasi-inner, so there exists an element $\Gamma \in W^2 (E)\otimes \wedge^2 E^*$
such that
$$\nabla^2 a=\frac{1}{\lambda} \lbr{\Gamma}{a} \qquad \forall a\in W(E)\otimes\wedge E^* ,$$
where $\lbr{\cdot}{\cdot}$ is the commutator in $W(E)\otimes\wedge
E^*$.
\end{lemma}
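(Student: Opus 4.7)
The plan is first to observe that $\nabla^2$ is itself an $A$-linear graded derivation of $W(E)\otimes\wedge E^*$, and then to establish the general fact that any such derivation is quasi-inner by exhibiting $\Gamma$ explicitly. Since $\nabla$ is an $\mathbb{R}[[\lambda]]$-linear graded derivation of bidegree $(0,1)$, the composition $\nabla^2=\tfrac12\lbr{\nabla}{\nabla}$ is automatically a graded derivation of bidegree $(0,2)$. The $A$-linearity is the routine Leibniz check: for $f\in A$, $\nabla^2(fa)=\nabla(df\wedge a+f\,\nabla a)=-df\wedge\nabla a+df\wedge\nabla a+f\nabla^2 a=f\nabla^2 a$, using $d^2f=0$ and the odd sign rule.

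Next, note that $W(E)\otimes\wedge E^*$ is generated as an $A$-algebra by $E=W^1(E)$ and $\wedge^1 E^*$, so any derivation is determined by its restriction to these two subspaces. The restriction of $\nabla$ to $\wedge E^*$ coincides with the exterior derivative by~(\ref{eq8}), hence $\nabla^2$ vanishes on $\wedge E^*$. On $E$, the derivation $\nabla$ preserves the $W$-degree, so $\nabla^2|_E$ takes values in $W^1(E)\otimes\wedge^2 E^*=E\otimes\wedge^2 E^*$; denote this map $R\in\mathrm{Hom}_A(E,E\otimes\wedge^2 E^*)$, the usual curvature of the symplectic connection.

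The key input is that $\nabla\omega=0$, whence $R\cdot\omega=0$, i.e.
$$\omega\bigl(R(X,Y)Z,W\bigr)+\omega\bigl(Z,R(X,Y)W\bigr)=0\qquad\forall X,Y,Z,W\in E.$$
Equivalently, the tensor $(Z,W)\mapsto\omega(R(X,Y)Z,W)$ is symmetric in $(Z,W)$, defining $\tilde R\in S^2 E^*\otimes\wedge^2 E^*$. Although $\omega_{\fm}$ at a given maximal ideal may be degenerate, the global form $\omega$ is nondegenerate on $M$, so $u:E\to\wedge^1 E^*$ is an $A$-module isomorphism. Applying $u^{-1}\otimes u^{-1}$ to the $S^2 E^*$-factor of $\tilde R$ produces an element $\Gamma\in S^2(E)\otimes\wedge^2 E^*$, which we identify with an element of $W^2(E)\otimes\wedge^2 E^*$ via Theorem~\ref{S-W-isom}.

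It remains to verify $\nabla^2 a=\tfrac{1}{\lambda}\lbr{\Gamma}{a}$. Both sides are $A$-linear derivations of bidegree $(0,2)$, so it is enough to check equality on the generators $E$ and $\wedge^1 E^*$. For $\alpha\in\wedge^1 E^*$ one has $\lbr{\Gamma}{\alpha}=0$ by the graded commutativity of $\wedge E^*$ (the $\wedge$-part of $\Gamma$ is even), matching $\nabla^2\alpha=0$. For $Z\in E$ a direct Weyl-algebra computation yields $\lbr{y\circ z}{Z}=\lambda\bigl(y\,\omega(z,Z)+\omega(y,Z)\,z\bigr)$, and upon substituting the definition of $\Gamma$ and using $\omega(u^{-1}(\eta),\,\cdot\,)=\eta$, this reproduces exactly $R(X,Y)Z$. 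I expect the main obstacle to be the bookkeeping in this final step: ensuring that the factors of $u^{-1}$ together with the symmetrization in $S^2 E$ match precisely the two terms produced by the Weyl commutator, so that the global identification $\Gamma\mapsto\tfrac{1}{\lambda}\lbr{\Gamma}{\,\cdot\,}$ really sends $\tilde R$ to $R$ rather than to some multiple or skew-symmetrization of it.
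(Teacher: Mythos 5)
The paper does not actually prove this Lemma; it is stated as ``well known'' with a citation to Theorem~3.3 of Farkas, and what the paper supplies instead is the explicit expression~(\ref{Gamma-def}) for $\Gamma$ in a projective basis, chosen precisely so that the two summands land in the left ideal $\mathcal{I}$ (which is the content of Prop.~\ref{Gamma-lem}, not of this Lemma). Your proof is the standard direct argument: identify $\nabla^2$ as an $A$-linear derivation vanishing on $\wedge E^*$, note that on $E$ it is the curvature $R$, use $\nabla\omega=0$ to lower one index of $R$ into a \emph{symmetric} 2-tensor $\tilde R\in S^2E^*\otimes\wedge^2E^*$, raise both with $u^{-1}$, and check the identity on generators via the Weyl-commutator formula. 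This is correct in outline, but be aware of two things. First, the normalisation you flag is a real gap: with $\tilde R$ symmetric and $\lbr{y\circ z}{Z}=\lambda\bigl(\omega(z,Z)y+\omega(y,Z)z\bigr)$, the two terms both contribute $R(X,Y)Z$, so the naive $\Gamma=(u^{-1}\otimes u^{-1})\tilde R$ produces $2\nabla^2$ rather than $\nabla^2$; a factor $\tfrac12$ must be inserted (equivalently, one must be careful about which symmetrisation convention the PBW map $\pi$ implements on $S^2E$). Second, your $\Gamma$ and the paper's~(\ref{Gamma-def}) agree only up to central terms: the paper deliberately orders the factors so that one summand has an $L$-element on the left (and uses that $\mathcal{I}$ is a left ideal) while the other has $u^{-1}(e^{\tilde\beta})\in L$ on the right, giving $\Gamma\in\mathcal{I}$ directly. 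The symmetric choice makes the verification of Prop.~\ref{Gamma-lem} less transparent; for this Lemma alone, either choice is fine since they differ centrally, but it is worth knowing why the paper prefers the ordered expression.
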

We will use the expression
\begin{equation}\label{Gamma-def}
\Gamma =  \sum\limits_{\tilde{\imath}=1}^{\tilde{\nu}}
u^{-1} (e^{\tilde{\imath}}) \circ \nabla^2 e_{\tilde{\imath}} +
\sum\limits_{\tilde{\imath}=\tilde{\nu}+1}^{\tilde{\nu}+\tilde{\nu}\vphantom{\nu}'} \nabla^2 e_{\tilde{\imath}} \circ u^{-1} (e^{\tilde{\imath}})
\end{equation}
which differs from the one used~\cite{Farkas} by central terms only,
so we can use our $\Gamma$ in Lemma~\ref{gamma}.
Since $\nabla^2 L\in \mathcal {I}$ and $\mathcal{I}$ is a left ideal, we see that first term in r.h.s. of~(\ref{Gamma-def}) belongs to $\mathcal{I}$. On the other hand, $u^{-1} (e^{\tilde{\beta}}) \in L$ since $u(L)={L'}^*$ due to~(\ref{eq5}), so the second term in r.h.s. of~(\ref{Gamma-def}) belongs to $\mathcal{I}$ too, so we obtain the following proposition.
\begin{prop}\label{Gamma-lem}
An element $\Gamma \in W^2 (E)\otimes \wedge^2 E^*$
  belonging to  $\mathcal{I}$ and obeying the condition of Lemma~\ref{gamma} there exists.
\end{prop}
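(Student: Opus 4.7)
The plan is to refine the existence statement of Lemma~\ref{gamma}. That lemma already produces \emph{some} $\Gamma'\in W^2(E)\otimes\wedge^2 E^*$ implementing $\nabla^2$ as an inner derivation, but with no a priori control over whether it lies in $\mathcal{I}$. The key observation is that the identity $\nabla^2 a=\frac{1}{\lambda}\lbr{\Gamma}{a}$ pins $\Gamma$ down only modulo the centre $A[[\lambda]]\otimes\wedge^2 E^*$ of $W(E)\otimes\wedge E^*$. Hence it suffices to write down any element of $\mathcal{I}$ that differs from $\Gamma'$ by a central correction, and the problem splits into a \emph{construction} step (choose such a $\Gamma$) and an \emph{identification} step (check it implements $\nabla^2$ modulo centre).

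The natural candidate is formula~(\ref{Gamma-def}), in which the polarization splitting $E=L\oplus L'$ and the projective bases $(e_{\tilde\alpha},e^{\tilde\alpha})$ of $L$, $(e_{\tilde\beta},e^{\tilde\beta})$ of $L'$ are used to arrange each summand so that the rightmost factor sits in $\mathcal{I}$. With the formula in hand, membership $\Gamma\in\mathcal{I}$ becomes a short application of the left-ideal property of $\mathcal{I}$. For the first sum, $e_{\tilde\alpha}\in L\subset\mathcal{I}_L\subset\mathcal{I}$, so iterating~(\ref{eq10}) twice yields $\nabla^2 e_{\tilde\alpha}\in\mathcal{I}$, after which left multiplication by $u^{-1}(e^{\tilde\alpha})\in W^1(E)$ keeps us inside $\mathcal{I}$. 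For the second sum, combining~(\ref{eq5}) with the nondegeneracy of $u$ gives $u^{-1}({L'}^*)=L$; since $e^{\tilde\beta}\in{L'}^*=\mathcal{I}_\wedge^1$, we obtain $u^{-1}(e^{\tilde\beta})\in L\subset\mathcal{I}$, and left multiplication by $\nabla^2 e_{\tilde\beta}\in W(E)\otimes\wedge^2 E^*$ again preserves the left ideal.

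The substantive step—and what I expect to be the main obstacle—is the identification half: verifying that~(\ref{Gamma-def}) really implements $\nabla^2$ as an inner derivation modulo centre, i.e.\ that it differs from Farkas's original curvature element by a scalar $2$-form. This reduces to computing $\frac{1}{\lambda}\lbr{\Gamma}{e_{\tilde\jmath}}$ for each generator $e_{\tilde\jmath}\in E$ and matching it against $\nabla^2 e_{\tilde\jmath}$. One uses that the connection is symplectic (so $u$ is parallel) and self-parallel w.r.t.\ $L$, together with the projective-basis identity $\sum_{\tilde\imath}e^{\tilde\imath}(x)e_{\tilde\imath}=x$. The reordering corrections picked up when $u^{-1}(e^{\tilde\imath})$ is commuted past $\nabla^2 e_{\tilde\imath}$ are $\lambda$-multiples of scalar functions, hence central and harmless; up to this central ambiguity the expression agrees with Farkas's, and the construction step above then delivers the required $\Gamma\in\mathcal{I}$.
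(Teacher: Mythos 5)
Your approach matches the paper's: both take the candidate~(\ref{Gamma-def}), observe that each summand has its right-hand factor lying in $\mathcal{I}$ (namely $\nabla^2 e_{\tilde\alpha}$ via $L\subset\mathcal{I}$ together with~(\ref{eq10}), and $u^{-1}(e^{\tilde\beta})\in L$ via $u(L)={L'}^*$), and conclude by the left-ideal property of $\mathcal{I}$. The identification step you flag as the main obstacle --- that~(\ref{Gamma-def}) agrees with Farkas's curvature element up to a central $2$-form --- is likewise only asserted, not computed, in the paper.
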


\section{Fedosov complex and star-product}

Let $W^{(i)}(E)$ be the grading in $W(E)$ which coincides with
$W^i (E)$ except for the $\lambda\in W^{(2)}(E)$, and let
$W_{(i)}(E)$ be the decreasing filtration generated by $W^{(i)}(E)$.
Suppose $\widehat{W}(E)$, $\widehat{\mathcal{I}}$ are completions
of $W(E)$, $\mathcal{I}$ with respect to this filtration, then
$\widehat{\mathcal{I}}$ is a left ideal in $\widehat{W}(E)\otimes
\wedge E^*$. Consider the filtration as an inverse system with natural inclusion $W_{(i+j)}(E)\subset W_{(i)}(E)$ and
let $A_i,\ i\in\mathbb{N}_0$ be the $(\lambda)$-adic
filtration in $A$, then $\tau (W_{(i)}(E))\subset A_{\{ i/2\}}$.
It is easily seen that  $\delta,\delta^{-1},\tau$ and $\nabla$ are transformations of the corresponding inverse systems, so they commute with taking inverse limits. Also it is well known that taking the inverse limits preserves short exact sequences and commutes with $\mathop{\mathrm{Hom}}(P,-)$ for any $P$. So we will write $A,W(E)$
etc. instead of $\widehat{A}$, $\widehat{W}(E)$ etc.

Let
$$r_0=0, \qquad r_{n+1}=\delta^{-1}\left(\Gamma+\nabla r_n +
\frac{1}{\lambda}r^2_n \right), \quad n\in\mathbb{N}_0 .$$ Then it
is well known that the sequence $\{r_n\}$ has a limit $r\in
W_{(2)}(E)\otimes \wedge^1 E^*$. Then we can define the well-known
Fedosov complex $DW^\bullet=(W(E)\otimes \wedge^n E^*, \ D)$ with
the differential
$$D=\delta+\nabla-\frac{1}{\lambda} \lbr{r}{\cdot}.$$
Using~(\ref{eq7}),(\ref{eq10}) and Prop.~\ref{Gamma-lem} and
taking into account that $\mathcal{I}$ is a left ideal in
$W(E)\otimes\wedge E^*$ we have $r_n\in \mathcal{I}$ for all $n$,
so $r\in \mathcal{I}$.
Using~(\ref{eq6}),(\ref{eq7}),(\ref{eq10}),(\ref{eq11a})  we see that $D\mathcal{I}_{\text{fin}} \subset \mathcal{I}_{\text{fin}}$
and $Q\mathcal{I}_{\text{fin}} \subset \mathcal{I}_{\text{fin}}$,
so we can define the subcomplex
$D{\mathcal{I}}_{\text{fin}}^\bullet=({\mathcal{I}}_{\text{fin}},D)$.
Define the left $W(E)\otimes\wedge E^*$-module $F=W(E)\otimes\wedge E^* /\mathcal{I}_{\mathrm{fin}}$ with the grading induced from $W(E)\otimes\wedge^\bullet E^*$, then we can define factor-complexes $\delta F^\bullet =(F^n,\delta)$ and $DF^\bullet=(F^n, D)$.

\begin{lemma}[\cite{Donin}]\label{lem3}
Let $F$ be an Abelian group which is complete with respect to its
decreasing filtration $F_i ,\ i\in\mathbb{N}_0$ such that $\cup F_i=F$ and $\cap F_i=\emptyset$. Let $\deg a=\max \{i: a\in F_i\}$ for $a\in
F$ and let $\varphi:\ F\rightarrow F$ be a set-theoretic map such that
$\deg (\varphi(a)-\varphi(b))>\deg (a-b)$ for all $a,b\in F$. Then the map $Id+\varphi$ is invertible.
\end{lemma}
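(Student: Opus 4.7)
The plan is to produce a two-sided inverse of $\mathop{\mathrm{Id}}+\varphi$ by a Picard-type iteration in the ultrametric topology supplied by the filtration. Concretely, given $y\in F$, I would set $x_0=y$ and define recursively $x_{n+1}=y-\varphi(x_n)$, with the intention that $(\mathop{\mathrm{Id}}+\varphi)^{-1}(y):=\lim_n x_n$.

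The first step is to verify that $\{x_n\}$ is Cauchy with respect to the filtration. Taking consecutive differences gives $x_{n+1}-x_n=-(\varphi(x_n)-\varphi(x_{n-1}))$, so the contraction hypothesis yields $\deg(x_{n+1}-x_n)>\deg(x_n-x_{n-1})$. Since $\deg$ is $\mathbb{N}_0$-valued, this strict increase forces $\deg(x_{n+1}-x_n)\to\infty$, so by completeness of $F$ the sequence converges to some $x\in F$. The contraction condition in particular implies that $\varphi$ is continuous in the filtration topology, so passing to the limit in the recursion gives $x=y-\varphi(x)$, i.e.\ $(\mathop{\mathrm{Id}}+\varphi)(x)=y$, establishing surjectivity.

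For injectivity, suppose $(\mathop{\mathrm{Id}}+\varphi)(x)=(\mathop{\mathrm{Id}}+\varphi)(x')$; then $x-x'=-(\varphi(x)-\varphi(x'))$, so $\deg(x-x')=\deg(\varphi(x)-\varphi(x'))>\deg(x-x')$. This is absurd unless $\deg(x-x')=\infty$, i.e.\ $x-x'\in\bigcap_i F_i$, which the Hausdorff hypothesis on the filtration (naturally read as $\bigcap_i F_i=\{0\}$, the only sensible interpretation since every $F_i$ contains the identity) then forces to be zero. I do not anticipate a serious obstacle: the whole argument is simply the Banach fixed point theorem transplanted to the ultrametric setting of a complete filtered abelian group, and the only point requiring a moment of attention is that integer-valued strict increase of $\deg$ on consecutive iterates already forces geometric-style convergence, so no separate estimate of a contraction rate is required.
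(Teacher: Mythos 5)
Your proof is correct. Note first that the paper gives no proof of this lemma at all: it is simply cited from Donin's paper, so there is no in-text argument to compare against. Your Picard iteration $x_{n+1}=y-\varphi(x_n)$ is exactly the ultrametric Banach fixed point argument that this kind of statement always comes down to, and every step checks out: the consecutive-difference estimate $\deg(x_{n+1}-x_n)>\deg(x_n-x_{n-1})$ does force the sequence to be Cauchy precisely because, in a filtered abelian group, $\deg(x_m-x_n)\geq\min_{n\leq k<m}\deg(x_{k+1}-x_k)$ (the ultrametric inequality, which you invoke implicitly and correctly); the contraction condition does imply filtration-continuity of $\varphi$ so that one may pass to the limit in the recursion; and the injectivity argument from $\deg(x-x')>\deg(x-x')$ is clean. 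You also correctly flag the two harmless imprecisions in the statement --- that $\bigcap F_i=\emptyset$ must mean $\bigcap F_i=\{0\}$ (the identity is in every subgroup), and that the contraction hypothesis should be read for $a\neq b$ or with the convention $\deg 0=\infty$. The only corner case you leave tacit is that if $x_1=x_0$ the iteration is stationary and one reads off the fixed point immediately rather than from a strictly increasing sequence of degrees, but this is trivial. In short: this is the standard proof, presumably the same one in the cited source, and it is complete.
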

Let $Q:\ W(E)\otimes\wedge E^* \rightarrow W(E)\otimes\wedge E^*$,
be the $\mathbb{R}[[\lambda]]$-linear
map $Q=Id+\delta^{-1}(D-\delta)$, then it is well known that $\delta Q=QD$ and from
Lemma~\ref{lem3} it follows that $Q$ yield an isomorphism in cohomology. Since $Q\mathcal{I}_{\mathrm{fin}}\subset \mathcal{I}_{\mathrm{fin}}$, we obtain the following commutative diagram of complexes with exact rows:
$$\xymatrix{
0 \ar[r] & \delta \mathcal{I}^\bullet_{\mathrm{fin}} \ar[r] & \delta W^\bullet \ar[r] & \delta F^\bullet \ar[r] & 0 \\
0 \ar[r] &  D\mathcal{I}^\bullet_{\mathrm{fin}} \ar[r] \ar[u]_{H(Q)} & DW^\bullet \ar[r] \ar[u]_{H(Q)} & DF^\bullet \ar[r] \ar[u]_\cong & 0.
}$$
Using~(\ref{eq4}),(\ref{eq13}) and the long exact sequence, we obtain
\begin{equation}\label{eq14}
H^0 (\delta F^\bullet)= A/\Phi, \qquad H^n
(\delta F^\bullet)=0,\ n>0.
\end{equation}
Then we can carry the structure of $\mathbb{R}$-algebra  from $H^0 (DW^\bullet)$  to $H^0 (\delta W^\bullet)$ and convert
 the structure of  left $H^0 (DW^\bullet)$-module on  $H^0 (DF^\bullet)$ into the structure of left $H^0 (\delta W^\bullet)$-module on $H^0 (\delta F^\bullet)$.
  Due to~(\ref{eq4}),(\ref{eq14}) this gives the Fedosov-type star-product $\ast_L$ on $A$ and the structure of a left $(A,\ast_L)$-module on $A/\Phi\cong C^\infty (\wp)$, so we obtain the following theorem.
\begin{theorem}
Let $M$ be a symplectic manifold and let $\mathcal{D}\subset TM$ be a real polarization on $M$.  Then  there exists a star-product $\ast_L$ on $M$  such that for an arbitrary leaf $\wp$  of $\mathcal{D}$ the $\mathbb{R}$-algebra $C^\infty (\wp)$ has a natural structure of a left $(C^\infty (M),\ast_L)$-module.
 \end{theorem}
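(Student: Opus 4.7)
The plan is to produce both the star-product and the module structure as transports of canonical structures on the cohomology of the Fedosov complex to that of the Koszul complex via a filtration-preserving chain homotopy $Q$; nearly every ingredient has already been assembled in the preceding sections, so the argument is essentially organizational. First I would verify that the iterative sequence $\{r_n\}$ converges in the filtration completion and that its limit $r$ lies in $\mathcal{I}$. Convergence is standard because the recursion raises filtration degree. Containment is an induction on $n$: $\Gamma\in\mathcal{I}$ by Proposition~\ref{Gamma-lem}, $\nabla\mathcal{I}\subset\mathcal{I}$ by~(\ref{eq10}), $\mathcal{I}$ is a left ideal so $r_n\circ r_n\in\mathcal{I}$ once $r_n\in\mathcal{I}$, and $\delta^{-1}(\mathcal{I})\subset\mathcal{I}$ by~(\ref{eq7}); the completion $\widehat{\mathcal{I}}$ then absorbs the limit. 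With $r\in\mathcal{I}\subset\mathcal{I}_{\mathrm{fin}}$ the Fedosov differential $D=\delta+\nabla-\frac{1}{\lambda}\lbr{r}{\cdot}$ is defined, and a parallel argument using~(\ref{eq6}),~(\ref{eq7}),~(\ref{eq10}),~(\ref{eq11a})---plus the fact that the left-ideal property of $\mathcal{I}_{\mathrm{fin}}$ forces both $r\circ a$ and $a\circ r$ to lie in $\mathcal{I}_{\mathrm{fin}}$ whenever $a\in\mathcal{I}_{\mathrm{fin}}$---gives $D\mathcal{I}_{\mathrm{fin}}\subset\mathcal{I}_{\mathrm{fin}}$ and analogously $Q\mathcal{I}_{\mathrm{fin}}\subset\mathcal{I}_{\mathrm{fin}}$ for $Q=\mathop{\mathrm{Id}}+\delta^{-1}(D-\delta)$.

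Next I would invoke Lemma~\ref{lem3}: since $\delta^{-1}(D-\delta)$ raises filtration degree, $Q$ is invertible, and the identity $\delta Q=QD$---standard Fedosov bookkeeping once one expands $D$ and applies~(\ref{eq1})---says $Q$ is a chain map from $DW^\bullet$ to $\delta W^\bullet$ inducing an isomorphism on cohomology. Since $Q$ preserves $\mathcal{I}_{\mathrm{fin}}$ it descends to the quotients, producing the commutative three-column ladder of short exact sequences displayed just above the theorem, with $H(Q)$ an isomorphism on each column. The long exact sequences of the two rows, together with~(\ref{eq4}) and~(\ref{eq13}), give~(\ref{eq14}): $H^0(\delta F^\bullet)\cong A/\Phi$ and the higher cohomologies vanish.

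Finally I would transport structures. The completed algebra $W(E)\otimes\wedge E^*$ is associative and $D$ is a derivation of it, so $H^0(DW^\bullet)$ inherits an $\mathbb{R}[[\lambda]]$-algebra structure and $H^0(DF^\bullet)$ a compatible left module structure over it. Pulling these back along the degree-zero part of $H(Q)$ yields the star-product $\ast_L$ on $A=C^\infty(M)$ and the left $(A,\ast_L)$-action on $A/\Phi\cong C^\infty(\wp)$ claimed in the theorem, the last identification being simply that $\Phi$ is the vanishing ideal of $\wp$. The main obstacle, and the point at which all of the preparatory machinery comes together, is the double containment $r\in\mathcal{I}$ and $D\mathcal{I}_{\mathrm{fin}}\subset\mathcal{I}_{\mathrm{fin}}$: it requires the self-parallel symplectic connection of Section~5, the modified $\Gamma\in\mathcal{I}$ of Proposition~\ref{Gamma-lem}, the ideal identities~(\ref{eq6})--(\ref{eq11a}), and implicitly the localization arguments underlying Theorem~\ref{S-W-isom} and Lemma~\ref{lem1}. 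Once it is in hand, the remaining homological algebra is formal.
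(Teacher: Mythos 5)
Your proposal reproduces the paper's argument step for step: convergence of $\{r_n\}$ and the inductive containment $r_n\in\mathcal{I}$ via Proposition~\ref{Gamma-lem}, (\ref{eq7}), (\ref{eq10}) and the left-ideal property; the resulting $D\mathcal{I}_{\mathrm{fin}}\subset\mathcal{I}_{\mathrm{fin}}$ and $Q\mathcal{I}_{\mathrm{fin}}\subset\mathcal{I}_{\mathrm{fin}}$; Lemma~\ref{lem3} plus $\delta Q=QD$ to get the cohomology isomorphism; the ladder of short exact sequences and the long exact sequence yielding~(\ref{eq14}); and finally transport of the algebra and module structures along $H(Q)$. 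This is essentially identical to the paper's proof, and your reconstruction is correct.
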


For the realization of $\ast_L$ in local charts using bidifferential operators see~\cite{IJGMP}.


\begin{thebibliography}{99}

\bibitem{Ber74} F.A. Berezin,  Quantization,  \textit{Math. USSR, Izv.} \textbf{8}  (1974), 1109-1165

\bibitem{BFFLS} F. Bayen, M. Flato, C. Fronsdal, A. Lichnerowicz and D. Sternheimer, Deformation theory and quantization.
\emph{Ann. Phys.} \textbf{111}  (1978), 61-110, 111-151

\bibitem{Hueb} J. Huebschmann,
Poisson cohomology and quantization, \textit{J. Reine Angew. Math.} \textbf{408} (1990), 57-113

\bibitem{Grab} J. Grabowski,
Abstract Jacobi and Poisson structures. Quantization and star-products, \emph{J. Geom. Phys.} \textbf{9} (1992), 45-73

\bibitem{Fedosov1} B.V. Fedosov,  A simple geometrical construction of
deformation quantization, \textit{J. Diff. Geom.} \textbf{40} (1994), 213-238

\bibitem{Fedosov2} B.V. Fedosov, \textit{Deformation quantization and index theory}, Akademie, B., 1996

\bibitem{Donin} J. Donin,  On the quantization of Poisson brackets,
\textit{Adv. Math.} \textbf{127} (1997), 73-93

\bibitem{Farkas} D. Farkas,  A ring-theorist's description of Fedosov quantization, \textit{Lett. Math. Phys.} \textbf{51} (2000), 161-177

\bibitem{WaldStates} S.~Waldmann, States and representations in deformation quantization, \textit{Rev. Math. Phys.} \textbf{17} (2005), 15-75

\bibitem{BordTrav}    M.~Bordemann, (Bi)modules, morphisms, and reduction of star-products: the symplectic case, foliations and obstructions, \textit{Travaux Math.} \textbf{16} (2005), 9-40

\bibitem{IJGMP} S.A. Pol'shin,  A cohomological construction of modules over Fedosov deformation quantization algebra, \textit{Int. J. Geom. Meth. Mod. Phys.} \textbf{5} (2008) 547-556

\bibitem{BGHHW}
M. Bordemann, G. Ginot, G. Halbout, H.-C. Herbig, and S. Waldmann,
Formalit\'{e} $G_\infty$ adapt\'{e}e et star-repr\'{e}sentations
sur  des sous-vari\'{e}t\'{e}s co\"{\i}sotropes,
\url{math.QA/0504276}

\bibitem{Morye} A.S. Morye, Note on the Serre-Swan Theorem, \url{arXiv:0905.0319}

\bibitem{Vaser} L.N. Vaserstein, Vector bundles and projective modules, \textit{Trans. Amer. Math. Soc.} \textbf{294} (1986), 749-755

\bibitem{Vin} J. Nestruev, \textit{Smooth manifolds and observables}, Springer, NY, 2003


\bibitem{Rin} G. Rinehart, Differential forms for general commutative algebras, \textit{Trans. Amer. Math. Soc.}, {\bf 108} (1963), 195-222.

\bibitem{Etayo} F.Etayo Gordejuela  and R.Santamaria,  The canonical connection on a bi-lagrangian manifold, \textit{J. Phys. A: Math.Gen.} \textbf{34} (2001), 981-987

\bibitem{HKR} G. Hochschild, B. Kostant and A. Rosenberg,
Differential forms on regular affine algebras,
\textit{Trans. Amer. Math. Soc.} \textbf{102} (1962), 383-408

\bibitem{Sridharan} R. Sridharan,  Filtered algebras and  representations of Lie algebras, \textit{Trans. Amer. Math. Soc.} \textbf{100} (1961), 530-550

\bibitem{BF} A.Bejancu and H.R.Farran,  \textit{Foliations and geometric structures}, Springer, Dordrecht, 2006


\bibitem{Xu} P.Xu,  Fedosov $\ast$-products and quantum momentum maps,
\textit{Commun. Math. Phys.} \textbf{197} (1998), 167-197




\end{thebibliography}
\end{document}